\renewcommand{\eprint}[1]{\href{https://arxiv.org/abs/#1}{#1}}
\DeclareMathOperator{\Hom}{Hom}
\newtheorem{thm}{Theorem}
\newcommand{\mf}[1]{\mathfrak{#1}}
\newcommand{\pmat}[4]{\begin{pmatrix}
#1 & #2 \\
#3 & #4
\end{pmatrix}}
\newcommand{\param}[4]{ #1(1+\frac12 #2^2 #3 #4)
            \begin{pmatrix}
            1 & #3 \\
            0 & 1
            \end{pmatrix}
            \begin{pmatrix}
            #2^{-1} & 0 \\
            0 & #2
            \end{pmatrix}
            \begin{pmatrix}
            1 & 0 \\
            #4 & 1 
            \end{pmatrix}}
\newcommand{\parami}[4]{ #1^{-1}(1+\frac12 #2^2 #3 #4)
            \begin{pmatrix}
            1 & -#2^2#3 \\
            0 & 1
            \end{pmatrix}
            \begin{pmatrix}
            #2 & 0 \\
            0 & #2^{-1}
            \end{pmatrix}
            \begin{pmatrix}
            1 & 0 \\
            -#2^2#4 & 1 
            \end{pmatrix}}
\newcommand{\rmk}[0]{\paragraph{\textbf{Remark}.}}
\newtheorem{Thm}{Theorem}[section]
\theoremstyle{definition}
\newtheorem{Def}[Thm]{Definition}
\theoremstyle{remark}
\newtheoremstyle{named}{}{}{\itshape}{}{\bfseries}{.}{.5em}{#1 #3}
\theoremstyle{named}
\def\R{\mathbb{R}}
\def\g{\mathfrak{g}}
\def\Frenkel:2013uda{\mathfrak{h}}
\def\a{\alpha}
\def\b{\beta}
\def\bo{\textbf{o}}
\def\=>{\Longrightarrow}
\def\to{\longrightarrow}
\def\o+{\oplus}
\def\bo+{\bigoplus}
\def\<{\langle}
\def\>{\rangle}
\def\({\left(}
\def\){\right)}
\def\^{\wedge}
\def\+{\dagger}
\def\dd[#1,#2]{\frac{d#1}{d#2}}
\def\del[#1,#2]{\frac{\partial #1}{\partial #2}}
\def\over[#1]{\overline{#1}}
\def\mr@ignsp#1 {\ifx\:#1\@empty\else #1\expandafter\mr@ignsp\fi}%
\newcommand{\multiref}[1]{\begingroup
\xdef\mr@no@sparg{\expandafter\mr@ignsp#1 \: }%
\def\mr@comma{}%
\@for\mr@refs:=\mr@no@sparg\do{\mr@comma\def\mr@comma{,}\ref{\mr@refs}}%
\endgroup}
\newcommand{\hypref}[2]{\ifx\href\asklFrenkel:2013udaas #2\else\href{#1}{#2}\fi}
\tikzset{->-/.style={decoration={
  markings,
  mark=at position .5 with {\arrow{latex}}},postaction={decorate}}}
\tikzset{
    >=latex
    }
\newcommand{\nc}{\newcommand}
\nc{\on}{\operatorname}
\nc{\la}{\lambda}
\nc{\wh}{\widehat}
\nc{\ghat}{\wh\g}
\nc{\mb}{\mathbf}
\begin{document}

\title{Flat $GL(1|1)$-connections and fatgraphs}

\author[A. Bourque]{Andrea Bourque}

\author[A.M. Zeitlin]{Anton M. Zeitlin}
\address{
          Department of Mathematics, 
          Louisiana State University, 
          Baton Rouge, LA 70803, USA}

\date{\today}

\begin{abstract}
We study the moduli space of flat $GL(1|1)$-connections on a punctured surface from the point of view of graph connections. To each fatgraph, a system of coordinates is assigned, which involves two bosonic and two fermionic variables per edge, subject to certain relations. In the case of trivalent graphs, we provide a closed explicit formula for the Whitehead moves. In addition, we discuss the invariant Poisson bracket.
\end{abstract}

\numberwithin{equation}{section}

\maketitle

\setcounter{tocdepth}{1}
\numberwithin{equation}{section}

\addtocontents{toc}{\protect\setcounter{tocdepth}{1}}
\addtocontents{toc}{\protect\setcounter{tocdepth}{2}}

\section{Introduction}

\label{sec:intro}

Recently, the study of super-analogues of Teichm\"uller spaces and supermoduli spaces achieved much progress. In particular, Penner-type coordinates were discovered in \cite{penzeit}, \cite{ipz}, \cite{ipz2} for $N=1, N=2$ versions of Teichm\"uller space and their decorated analogues. In that particular case, these spaces were viewed as a subspace of the character variety $\Hom(\pi_1(F),G)/G$, where $\pi_1(F)$ 
is the fundamental group of a hyperbolic Riemann surface  $F$ with punctures. For the standard Teichm\"uller space $T(F)$, $G=PSL(2,\mathbb{R})$, while the super-analogues $ST_{N=1}(F)$, $ST_{N=2}(F)$ cases are related to rank $1$ and rank $2$ supergroups $OSP(1|2)$, $OSP(2|2)$ correspondingly. 

Penner coordinates \cite{DTT}, \cite{penner} are essential not only in the study of hyperbolic geometry, but they also provide a geometric example of a fundamental algebraic object, known as a cluster algebra. Let us briefly characterize the context. One starts from an ideal triangulation of the Riemann surface, or, equivalently, the dual trivalent fatgraph (aka ribbon graph). Penner coordinates assign a parameter for every edge of triangulation/fatgraph related to a suitably renormalized geodesic length. This provides coordinates for a trivial bundle $\tilde{T}(F)$ over $T(F)$, known as the decorated Teichm\"uller space. There is a simple transformation to coordinates on Teichm\"uller space so that these new coordinates are subject to linear constraints. One of the benefits of such coordinates is that the 
action of mapping class group on $\tilde{T}(F)$ is described in a combinatorial way by embedding into the Ptolemy groupoid. The Ptolemy groupoid is generated by elementary moves on the fatgraphs, called flips. The action of flips on Penner coordinates gives an example of the so-called cluster transformations. 
The other benefit of Penner coordinates is that they serve as  Darboux-type coordinates for the Weil-Petersson 2-form, which makes them useful for the quantization of $T(F)$ \cite{Kashaev97}, \cite{Chekhov99}.  
 
The supergroups $OSP(1|2)$, $OSP(2|2)$ which give rise to super-Teichm\"uller spaces both contain $G=SL(2,\mathbb{R})$ as their body subgroup. Penner coordinates have been generalized successfully in both of these cases, leading to the super-analogue of Ptolemy transformation. Currently, there are a lot of attempts to construct a super-analogue of cluster algebras based on these formulas \cite{ovshap},\cite{musikoven}, \cite{musikoven2}. The critical ingredient of both constructions were the graph $G$-connections: $\mathbb{Z}_2$-graph connections describing the spin structures on $F$ for $ST_{N=1}(F)$, and two such spin structures accompanied by $GL(1)_+$-graph connection for $ST_{N=2}(F)$. In particular, part of the decoration for $ST_{N=2}(F)$ was related to the gauge equivalences of $GL(1)_+$-graph connections. 
One of the choices of root systems for $OSP(2|2)$ is such that simple roots are ``grey", namely each of them gives rise to a $GL(1|1)$ subgroup. $GL(1|1)$ is a reductive supergroup of rank $1$, which contains two abelian subgroups as its body. Thus, only the odd coordinates are responsible for non-commutativity. 

In this note, we study the first nontrivial case of a character variety related to simple supergroups, namely
$${\rm M}_{GL(1|1)}=\Hom(\pi_1(F), GL(1|1))/GL(1|1),$$ 
which is quite interesting on its own. We will consider this space as the space of $GL(1|1)$-graph connections on the trivalent fatgraph associated with $F$. Then, we will define coordinates on this space through the assignment of specific parameters to edges of the fatgraph. These parameters are related to Gaussian decomposition of $GL(1|1)$. Using these coordinates, we obtain a characterization of the action of the Ptolemy groupoid. Finally, we discuss a Poisson bracket structure on ${\rm M}_{GL(1|1)}$.

We believe that $GL(1|1)$ character variety is essential in the context of the super-analogue of abelianization \cite{hn}. Another important context is the quantum $GL(1|1)$ Chern-Simons theory \cite{Rozansky92}, which recently attracted some attention \cite{Aghaei18}. 

The structure of the paper is as follows. Section 2 reviews some of the notions of super mathematics and necessary facts about the $GL(1|1)$ supergroup. Section 3 discusses fatgraphs, $G$-graph connections, and Ptolemy groupoid actions on trivalent fatgraph $G$-connections. Section 4 is devoted to the construction of a coordinate system on ${\rm M}_{GL(1|1)}$ and its decorated version using its fatgraph $GL(1|1)$-connection realization through Gaussian decomposition of $GL(1|1)$. Section 5 is primarily of a computation nature, where we describe the action of flip transformation using the minimal amount of changing variables in the decorated space. Finally, in Section 6, we discuss the Poisson bracket structure on ${\rm M}_{GL(1|1)}$.

\subsection*{Acknowledgements}  We thank R.C. Penner for useful discussions and his comments on the manuscript. A.M.Z. is partially supported by Simons Collaboration Grant 578501 and NSF grant DMS-2203823.

\section{$GL(1|1)$ supergroup}
\subsection{Conventions on superspaces and Grassmann algebras}

In this section we follow the conventions from \cite{Supermanifolds}.

Let $\R^{S[N]}$ the real Grassmann algebra with generators $1, \beta_{[i]}$, for $i=1,2,...,N$. The generators have relations $1\beta_{[i]}=\beta_{[i]}=\beta_{[i]} 1$ and $\beta_{[i]}\beta_{[j]}=-\beta_{[j]}\beta_{[i]}$. In particular, $(\beta_{[i]})^2=0$. We also use the notation $\beta_{[\lambda]} = \beta_{[\lambda_1]}\cdot\cdot\cdot\beta_{[\lambda_k]}$ for an ordered multi-index $\lambda =\lambda_1,...,\lambda_k$. Note that if $\lambda$ has a repeated index, then $\beta_{[\lambda]}=0$. If the multi-index is empty, the corresponding element is the empty product, 1. By our commutation relations, any $\beta_{[\lambda]}$ can have its terms rearranged so that the indices of the terms are increasing. Thus, an element in $\R^{S[N]}$ can be written in the form $x=\sum_\lambda x_\lambda \beta_{[\lambda]}$ for $x_\lambda \in \R$ as $\lambda$ runs over all strictly increasing multi-indices. 

\begin{Def}
The \textit{degree} of a term $x_\lambda\beta_{[\lambda]}\in \R^{S[N]}$ is defined as the size of the multi-index $\lambda$. 
\end{Def}

Thus $\R^{S[N]}$ has a superalgebra structure given by the decomposition $\R^{S[N]}=\R^{S[N]}_0\oplus \R^{S[N]}_1$ into elements which are sums of terms of even (respectively, odd) degree. Since the $\beta$ generators anti-commute, $\R^{S[N]}$ is supercommutative. 

\begin{Def}
The \textit{body map} $\epsilon:\R^{S[N]}\to \R$ is the projection of an element onto its coefficient of $1$. The \textit{soul map} $s:\R^{S[N]}\to \R^{S[N]}$ is defined by $s(x)=x-\epsilon(x)\cdot 1$.
\end{Def}

Since there are $N$ anti-commuting generators, we have that $s(x)^{N+1} = 0$. Thus $\epsilon(x)\neq 0$ if and only if $x$ is invertible. Explicitly, $$x^{-1}=\dfrac1{\epsilon(x)}\left(1-\dfrac{s(x)}{\epsilon(x)} + \left(\dfrac{s(x)}{\epsilon(x)}\right)^2-... + (-1)^N\left(\dfrac{s(x)}{\epsilon(x)}\right)^N \right).$$ In a similar vein, we can use series expansions to define, say, $\sqrt x$ for $x$ with positive body, as the series will terminate. \newline

\rmk Inequalities such as $x>0,x<0,x\neq 0$ for $x\in \R^{S[N]}$ will be taken to be inequalities on the body $\epsilon(x)$. \newline

\begin{Def}
Given $ \R^{S[N]} = \R^{S[N]}_0\oplus \R^{S[N]}_1$, the \textit{superspace} $\mathbb R^{1|1}$ is defined as $\R^{S[N]}_0\times \R^{S[N]}_1$. 
\end{Def}

In other words, we have a two-dimensional space with one even and one odd coordinate. We can define more generally $\R^{p|q}=\(\R^{S[N]}_0\)^p\times \(\R^{S[N]}_1\)^q$. \\

\rmk From now on, we will use the convention that odd elements are denoted by Greek letters, and even elements are denoted by Latin letters.

\subsection{$GL(1|1)$ supergroup and its Lie superalgebra}

\begin{Def}
A \textit{Lie superalgebra} is a superalgebra whose multiplication, denoted by $[X,Y]$, is super-anticommutative, and furthermore satisfies the super Jacobi identity $(-1)^{|X||Z|}[X,[Y,Z]] + (-1)^{|Z||Y|}[Z,[X,Y]] + (-1)^{|Y||X|}[Y,[Z,X]] = 0$.
\end{Def}

Let us introduce a Lie superalgebra $\mf{gl}(1|1)$. It has two even generators $E, N$ and two odd generators $\Psi^{\pm}$, which satisfy the following commutation relations: 
\begin{eqnarray}
[N,\Psi^\pm]=\pm\Psi^\pm,\quad [\Psi^+,\Psi^-]=E, \quad [E, \Psi^{\pm}]=[E,N]=0.
\end{eqnarray}
In the defining representation, as elements of ${\rm End}(\mathbb{R}^{1|1})$ these generators are given by the supermatrices below:
\begin{eqnarray}
E = \pmat 1001, N = \pmat {\frac12}00{-\frac12}, \Psi^+=\pmat 0100, \Psi^-=\pmat 0010.
\end{eqnarray}

\begin{Def}
$GL(1| 1)$ is the group of invertible linear transformations of $\mathbb R^{1| 1}$. Elements of $GL(1|1)$ can be identified as supermatrices of the form $\pmat a\alpha \beta b$ for $a,b\in \R^{S[N]}_0,\alpha,\beta\in\R^{S[N]}_1$ with $\epsilon(a),\epsilon(b)\neq 0$. 
\end{Def}

In what follows, $GL(1|1)$ will refer to the identity component of this group, which means the even entries $a,b$ will have positive body. 

The Lie superalgebra of $GL(1|1)$ is given by $\mathfrak{gl}(1|1)$, so that any element of $GL(1|1)$ can be represented as $e^{R}$, where  $R=nN+eE+\psi_{+}\Psi^{+}+\psi_{-}\Psi^{-}$.  

To keep the Lie superalgebra/Lie supergroup correspondence explicit, we choose the following multiplication of two elements in $GL(1| 1)$ as follows: 
\begin{eqnarray}
\pmat a\alpha\beta b \pmat c\gamma\delta d = \pmat{ac-\alpha\delta}{a\gamma+d\alpha}{c\beta+d\delta}{bd-\beta\gamma}.
\end{eqnarray}

In particular, we note the minus signs in the multiplication formula.  Let us elaborate on this choice. We identify $e^{\alpha\Psi^+}=1+\alpha\Psi^+$ $=\pmat1001 + \alpha\pmat0100$ with $\pmat1\alpha01$. Then, when multiplying $\pmat1\alpha01\pmat10\beta1$, the result should agree with $(1+\alpha\Psi^+)(1+\beta\Psi^-)$. Upon expanding the latter product, we have the term $\alpha\Psi^+\beta\Psi^-$. Since $\beta$ and $\Psi+$ are both odd, we have $\Psi^+\beta = -\beta\Psi^+$. Thus $$\pmat1\alpha01\pmat10\beta1 = 1 + \alpha\Psi^+ + \beta\Psi^- - \alpha\beta\Psi^+\Psi^-=\pmat{1-\alpha\beta}\alpha\beta1,$$ which agrees with our choice of multiplication.

Other references may define supermatrix multiplication without these extra signs. There is an isomorphism $\pmat a\alpha\beta b\mapsto\pmat a{-\alpha}\beta b$ from our convention to the other convention.

There is also a notion of supertrace, namely $str\pmat a\a\b b = a-b$. This gives rise to a nondegenerate invariant bilinear form on $\mf{gl}(1|1)$. This takes the role of the Killing form, which is degenerate in this case.

\subsection{Parametrization of $GL(1|1)$ and its decomposition}

Any element in $GL(1|1)$ admits the following unique Gaussian factorization: 
\begin{eqnarray}\label{gauss}
\pmat a\alpha\beta b = \pmat 1{\frac\alpha b}01\pmat{a+\frac{\alpha\beta}b}00b\pmat 10{\frac\beta b}1.
\end{eqnarray}
We choose the following parametrization of $g\in GL(1|1)$:
\begin{eqnarray}\label{paramg}
g(a,b; \alpha,\beta )=\param ab\alpha\beta.
\end{eqnarray}
The scalar factor in front is meant as an element propotional to the unit matrix as an element of $GL(1|1)$.
This presentation is unique and thus provides a parametrization of $GL(1|1)$. Given an element in the standard form, say $\pmat A\phi\psi B$, its coordinates in this parametrization are 
\begin{eqnarray} 
a=\sqrt{AB}, b = \sqrt{\dfrac BA}\left( 1-\dfrac{\phi\psi}{2AB} \right) ; \alpha = \dfrac\phi B, \beta = \dfrac\psi B.
\end{eqnarray}

The reason to introduce the extra factor in front is that it provides compact formula for the inverse element. We write the formulas for multiplication and inversion in these coordinates:
\begin{eqnarray}
&&(a,b;\alpha,\beta)(c,d;\gamma,\delta) = (ac(1-\frac12\b\gamma)(1-\frac12b^2d^2\alpha\delta),bd;\alpha+b^{-2}\gamma,d^{-2}\beta+\delta),\nonumber\\
&&(a,b;\alpha,\beta)^{-1} = (a^{-1},b^{-1};-b^2\alpha,-b^2\beta).
\end{eqnarray}

\section{Fatgraphs and graph connections}

We consider surfaces $F$ with genus $g\geq 0$ and $s\geq 1$ punctures, such that $2g-2+s>0$. 

\begin{Def}
A \textit{fatgraph} (also known as \textit{ribbon graph}) is a graph with a cyclic ordering of the edges at each vertex. An \textit{orientation} on a fatgraph is an assignment of direction to each edge of the graph.
\end{Def}

One can reconstruct $F$ from a fatgraph $\tau$ by fattening the edges and verticies along with cyclic ordering of edges at the vertices (see e.g. \cite{DTT} for details). The condition $2g-2+s>0$ is necessary to choose a trivalent fatgraph for a given surface, as the number of vertices of such a graph is equal to $2(2g-2+s)$.
There are Whitehead (flip) transformations as in Figure \ref{fig:graphflip}, which take the edge $e$ between 
vertices $u, v$, adjacent to $c,d$ and $a, b$ respectively, shrinks it, and then extends an edge $f$ connecting vertices $u',v'$ adjacent to $b,c$ and $a,d$ respectively. Flips are known to act transitively on the collection of trivalent fatgraphs of $F$. Altogether they form a {\it Ptolemy groupoid ${\rm Pt}(F)$}. 
Composition of flips are known to give generators for the mapping class group of a surface \cite{DTT}.

Given a fatgraph $\tau$ and a Lie (super)group $G$, we define a $G$-graph connection on $\tau$ as follows.

\begin{Def}
A $G$-\textit{graph connection} is the assignment to each edge $e$ of $\tau$ an element $g_{e}\in G$ and an orientation, so that $g_{\bar{e}}=g_e^{-1}$ if $\bar{e}$ is the inverse-oriented edge. We denote the space of $G$-graph connections as $\tilde{\rm M}_G(\tau)$. 
Two graph connections $\{g_e\}$, $\{\tilde{g}_e\}$ are \textit{gauge equivalent} if there is an assignment $v\rightarrow h_v\in G$, for all vertices $v\in \tau$, such that $\tilde{g}_e=h_v^{-1}g_eh_{v'}$, where $e$ starts and ends at $v'$ and $v$ respectively. We will denote the space of gauge equivalence classes of elements in $\tilde{\rm M}_G(\tau)$ by ${\rm M}_G(\tau)$. 
\end{Def}

The space of graph connections modulo gauge equivalences can be identified with a more common differential-geometric object. Namely, there is a natural one-to-one correspondence between ${\rm M}_G(\tau)$ and the moduli space of flat $G$-connections. This correspondence is constructed as follows: one identifies $M_A(e)$, the monodromy of the flat connection $A$ along the oriented edge of the fatgraph $e$, with the group element $g_e$. This way, the space $\tilde{\rm M}_G(\tau)$ is identified with the space of flat connections modulo gauge transformations which are equal to identity at the vertices. Altogether, compositions of these group elements along the cycles of the fatgraph contain all information about the gauge classes of $A$. However, there is a residual gauge symmetry at the vertices of the fatgraph, which one has to take into account, and that is precisely the equivalence relation for graph connections. 

One can formulate this as follows.
\begin{thm}
If $F$ deformation retracts to $\tau$, then the moduli space of flat $G$-connections on $F$ is isomorphic to the space of gauge equivalent classes of $G$-graph connections on $\tau$ corresponding to $F$, i.e. 
\begin{equation}
{\rm M}_G(\tau)\cong \Hom(\pi_1(F), G)/G 
\end{equation}
\end{thm}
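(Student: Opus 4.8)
The plan is to establish the isomorphism by exhibiting a natural map in one direction and showing it is a bijection compatible with the gauge/conjugation equivalences. I would phrase everything in terms of the deformation retraction $r\colon F\to\tau$, which induces an isomorphism $\pi_1(F,x_0)\cong\pi_1(\tau,x_0)$ of fundamental groups, where $x_0$ is a chosen basepoint, say a vertex of $\tau$. The key structural fact I would invoke is that $\pi_1(\tau,x_0)$ is a free group, since $\tau$ is a graph (a one-dimensional CW complex): choosing a maximal spanning tree $T\subset\tau$, the free generators are indexed by the edges of $\tau$ not in $T$. This is what makes the whole statement tractable, because a homomorphism out of a free group is determined freely by its values on generators.

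First I would describe the map from $G$-graph connections to representations. Given a graph connection $\{g_e\}$, choose the basepoint vertex $x_0$; any based loop $\gamma$ in $\tau$ traverses a sequence of oriented edges $e_1,e_2,\dots,e_k$, and I assign to its homotopy class the ordered product $g_{e_1}g_{e_2}\cdots g_{e_k}$. One checks this is well defined on $\pi_1(\tau,x_0)$: the only relations in the free groupoid of edge-paths come from backtracking $e\bar e$, and the defining condition $g_{\bar e}=g_e^{-1}$ kills exactly these, so the assignment descends to a homomorphism $\rho_{\{g_e\}}\colon\pi_1(\tau,x_0)\to G$. Composing with the retraction isomorphism gives an element of $\Hom(\pi_1(F),G)$. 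I would then verify that a gauge transformation $\{h_v\}$ changes $\rho$ only by conjugation by $h_{x_0}$ at the basepoint: the interior factors telescope, since $h_{v'}^{-1}$ at the end of one edge cancels $h_{v'}$ at the start of the next, leaving $h_{x_0}^{-1}(\cdots)h_{x_0}$. Hence the construction descends to a well-defined map $\Phi\colon {\rm M}_G(\tau)\to\Hom(\pi_1(F),G)/G$.

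Next I would prove $\Phi$ is a bijection by building an explicit inverse using the spanning tree $T$. Given a representation $\rho$, I use the residual gauge freedom (which acts freely enough) to gauge-fix $g_e=\mathrm{id}$ for every edge $e\in T$; this is possible because $T$ is a tree, so one propagates the normalization outward from $x_0$ vertex by vertex with a unique choice of $h_v$ at each step, and it is this flexibility that realizes each gauge orbit by a tree-normalized representative. Once the tree edges carry the identity, each loop generator of $\pi_1(\tau,x_0)$ associated to a non-tree edge $e$ is homotopic to a path that runs through $T$, crosses $e$, and returns through $T$; its monodromy under $\rho$ is then forced to equal $g_e$, so I set $g_e=\rho(\text{corresponding generator})$ for non-tree edges and $g_e=\mathrm{id}$ for tree edges. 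This determines $\{g_e\}$, and one checks $\Phi$ of this connection recovers $\rho$ up to conjugacy, while two representations differing by global conjugation yield gauge-equivalent connections. Surjectivity of $\Phi$ is then immediate, and injectivity follows because any connection is gauge-equivalent to its tree-normalized form, which is determined by $\rho$ up to the leftover simultaneous conjugation.

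I expect the main obstacle to be the careful bookkeeping of the gauge action in the super/non-abelian setting rather than any deep topological point. Concretely, the delicate step is verifying that tree-normalization is both achievable and exhausts the gauge orbit without introducing extra identifications: one must confirm that after fixing $g_e=\mathrm{id}$ on $T$, the only remaining gauge freedom is conjugation by the single element $h_{x_0}$, matching exactly the conjugation by $G$ on the representation side. For $G=GL(1|1)$ this requires knowing that the group multiplication and inversion (recorded in the Gaussian-parametrized formulas of the previous section) behave well, but no feature of the supergroup obstructs the argument, since the proof only uses that $G$ is a group and that $\pi_1(\tau)$ is free; the statement is really the classical equivalence between flat connections and holonomy representations, specialized to a graph where flatness is automatic. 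I would therefore present the free-group and spanning-tree structure as the conceptual heart and relegate the multiplication-formula checks to routine verification.
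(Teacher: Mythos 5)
Your proof is correct, and it fills in genuine content that the paper leaves implicit. The paper does not really prove this theorem: it only sketches the correspondence in the preceding paragraph, identifying each edge element $g_e$ with the monodromy $M_A(e)$ of a flat connection $A$ along that edge, observing that $\tilde{\rm M}_G(\tau)$ then matches flat connections modulo gauge transformations that are the identity at the vertices, and noting that the residual gauge symmetry at the vertices is exactly the graph-connection equivalence. Your route is the same correspondence read combinatorially rather than differential-geometrically: you replace ``monodromy of $A$'' by the holonomy homomorphism out of the free group $\pi_1(\tau,x_0)$, and, crucially, you supply the bijectivity argument the paper omits, namely spanning-tree gauge fixing --- normalize $g_e=\mathrm{id}$ on a maximal tree $T$, identify the non-tree edge elements with the images of the free generators, and check that the only leftover gauge freedom is constant conjugation, matching the $G$-action on $\Hom(\pi_1(F),G)$. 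What the paper's phrasing buys is geometric intuition and a direct link to the flat-connection picture (which is how the theorem is stated); what yours buys is an actual proof of the displayed isomorphism, with the telescoping computation showing gauge transformations act by basepoint conjugation and an explicit inverse map. The one point worth flagging is that your argument establishes ${\rm M}_G(\tau)\cong \Hom(\pi_1(F),G)/G$ directly, and the identification of the right-hand side with the moduli of flat connections is then quoted as the classical holonomy correspondence; that is the same division of labor the paper makes, so nothing is lost.
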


For any element $A\in \Hom(\pi_1(F), G)/G $, let us denote $A(\tau)$ the corresponding graph connection on a fatgraph $\tau$. 





We are interested in understanding the action of ${\rm Pt}(F)$ on graph connections. 
We require that for any $\Gamma\in {\rm Pt}(F)$ and any cycle $c\in \tau$, if $A$ is the flat connection, the transformed connection $A^{\Gamma}$ is such that $M_{A^{\Gamma}}(\Gamma(c))=M_A(c)$. 
This gives a natural action of ${\rm Pt}(F)$ on $\Hom(\pi_1(F), G)/G $.
    
To write it formally, we look at the elementary flip transformation as in Figure \ref{fig:graphflip}, 
which involves 5 edges. 

\begin{figure}
    \centering
    \includegraphics[scale=0.4]{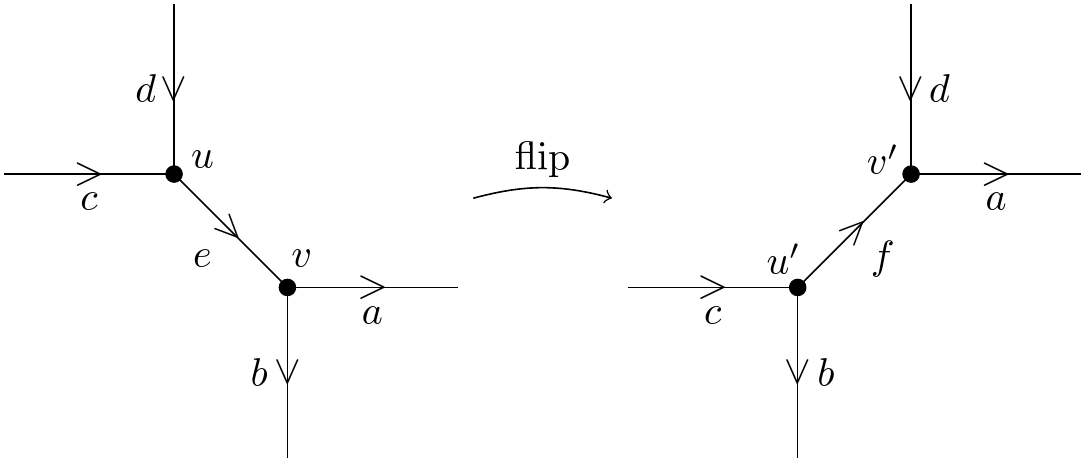}
    \caption{Whitehead flip of a fatgraph, at the edge $e$.}
    \label{fig:graphflip}
\end{figure}

There are 6 pieces of possible cycles on such a fatgraph, corresponding to a choice of 2 of the 4 boundary edges.  The monodromy conservation along these pieces after the flip transformation thus leads to the following equations:
\begin{align*}
    g_ag_eg_c &= g_a'g_fg_c', \\
    g_bg_eg_d &= g_b'g_f^{-1}g_d', \\
    g_ag_eg_d &= g_a'g_d', \\
    g_d^{-1}g_c &= {g_d'}^{-1}g_fg_c', \\
    g_bg_eg_c &= g_{b}'g_{c}', \\
    g_ag_b^{-1} &= g_a'g_f{g_b'}^{-1}.
\end{align*}

The solution to those equations is unique up to equivalence and given by the following theorem.

\begin{thm}
For any flip $\Gamma$ of $\tau$, 
the transformation $A\to A^{\Gamma}$ is such that 
the $G$-graph connection corresponding to $A^{\Gamma}(\Gamma(\tau))$ is related to $A(\tau)$ as depicted in Figure \ref{fig:graphflips}, where
\begin{figure}
    \centering
    \includegraphics[scale=0.4]{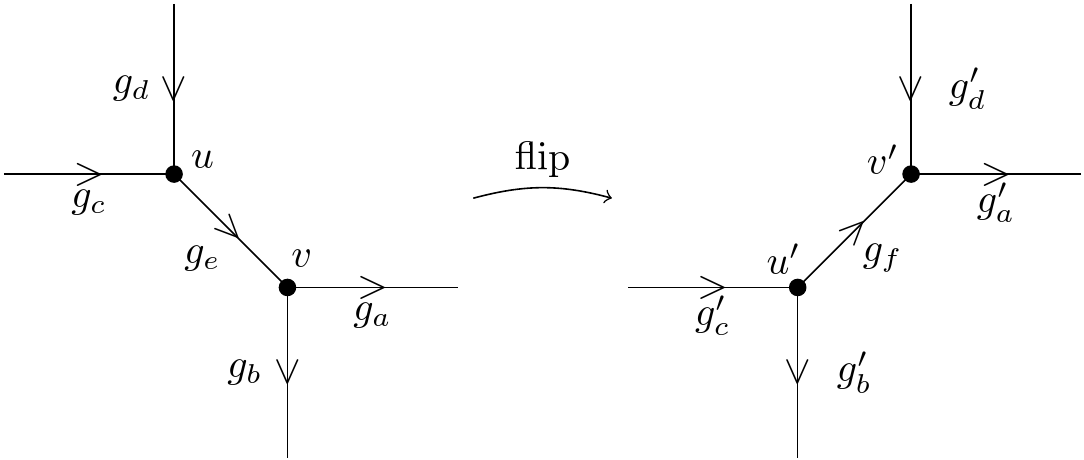}
    \caption{Flip transformation on a graph connection.}
    \label{fig:graphflips}
\end{figure}
\begin{align*}
    g_a' = g_a &\hspace{3em} g_{b}' = g_bg_e \\
    g_c' = g_c &\hspace{3em} g_d' = g_eg_d \\
    &g_f = g_e.
\end{align*}
\end{thm}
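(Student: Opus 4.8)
The plan is to separate the statement into a \emph{correctness} part---the displayed assignment solves all six monodromy-conservation equations---and a \emph{uniqueness} part---any solution is gauge equivalent to it. The structural fact that makes both parts clean is that the six equations are not independent: the second, fourth and sixth are algebraic consequences of the first, third and fifth. For instance, multiplying the first relation on the right by the inverse of the fifth gives $g_a g_e g_c (g_b g_e g_c)^{-1} = g_a' g_f g_c'(g_b' g_c')^{-1}$, which simplifies to exactly the sixth relation $g_a g_b^{-1} = g_a' g_f {g_b'}^{-1}$; similarly $(\text{third})^{-1}\cdot(\text{first})$ reproduces the fourth, and $(\text{fifth})\cdot(\text{first})^{-1}\cdot(\text{third})$ reproduces the second. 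Thus only three of the equations carry information, matching the three honest degrees of freedom that remain once the gauge is fixed.

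For correctness I would substitute $g_a' = g_a$, $g_b' = g_b g_e$, $g_c' = g_c$, $g_d' = g_e g_d$ and $g_f = g_e$ directly. With $g_f = g_e$ the first, third and fifth relations are immediate identities, and by the dependencies just noted the remaining three then hold automatically; alternatively one checks them by hand, each collapsing after a single cancellation $g_e g_e^{-1} = 1$. Nothing here uses the specific structure of $GL(1|1)$, so this step works verbatim for an arbitrary Lie (super)group $G$.

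For uniqueness I would use the residual gauge freedom. In the flipped graph the only interior vertices are $u'$ and $v'$, incident respectively to the edges $\{b,c,f\}$ and $\{a,d,f\}$; the outer endpoints of $a,b,c,d$ belong to the unchanged part of $\tau$ and carry a fixed gauge, so the residual freedom is precisely the pair $(h_{u'},h_{v'})\in G\times G$. The six products on the right-hand sides are invariant under this freedom, because along any path crossing the region the factors $h_{u'}$ and $h_{v'}$ telescope with their inverses; hence the equations hold in every interior gauge. I would then use $h_{v'}$ to normalize the $a$-edge to $g_a' = g_a$ and $h_{u'}$ to normalize the $c$-edge to $g_c' = g_c$ (possible since each acts on the relevant factor by multiplication, and $a$ meets only $v'$ while $c$ meets only $u'$, so the two normalizations do not interfere). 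In this gauge the first, third and fifth relations read $g_a g_f g_c = g_a g_e g_c$, then $g_a g_d' = g_a g_e g_d$, then $g_b' g_c = g_b g_e g_c$, forcing $g_f = g_e$, $g_d' = g_e g_d$ and $g_b' = g_b g_e$ uniquely, which is exactly the displayed assignment.

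The main obstacle is bookkeeping rather than conceptual: one must fix the edge orientations once and for all so that the six relations take precisely the stated form and so that the gauge acts by the left/right multiplications prescribed in the definition of gauge equivalence; with a wrong orientation the telescoping of the $h$-factors, and hence the cancellations, would fail. Verifying the three dependency relations is mechanical but must be carried out with care because $G$ is noncommutative (and possibly a supergroup), so the order of factors cannot be permuted. The conceptual content behind the consistency of the over-determined system is simply that a Whitehead move is a homotopy equivalence, which is why the monodromy pieces through the region can never be mutually contradictory.
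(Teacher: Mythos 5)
Your proof is correct and follows essentially the same route as the paper: the gauge choice the paper states, $h_{u'}=c'c^{-1}$ and $h_{v'}=(a')^{-1}a$, is precisely your normalization of the $c$- and $a$-edges at the two interior vertices, after which the remaining monodromy equations force $g_f=g_e$, $g_b'=g_bg_e$, $g_d'=g_eg_d$. Your explicit observation that the second, fourth, and sixth monodromy equations are algebraic consequences of the first, third, and fifth is a useful refinement the paper leaves implicit (the paper also sketches an alternative shrink-and-expand trick that you do not need), but it does not amount to a different approach.
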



One can verify the relations explicitly by the choice of gauge transformation $h_{u'}=c'c^{-1}$ and $h_{v'}=(a')^{-1}a$. However, another method is the following trick. One can use gauge transformations to make $g_e=1$ and then shrink that edge, since it does not participate in monodromy. This results in a 4-valent vertex. Expanding the edge makes two trivalent vertices and the edge $f$. After that, one makes a gauge transformation to achieve
$g_f=g_e$.

We note that this solution works for any group; however, it is asymmetric. Group elements spread in one direction of the graph but not the other. In the next section we will put the coordinates on ${\rm M}_G(\tau)$,
represented as graph connections in the case of $G=GL(1|1)$. We will find the formulas for the flip transformations in those coordinates which remove this spreading effect.

\section{Coordinates on the moduli space of $GL(1|1)$-graph connections}

We fix a surface $F$ with genus $g\geq 0$ and $s\geq 1$ punctures such that $2g+s-2>0$. We also fix a trivalent fatgraph $\tau\subset F$ with orientation $o$.

Assigning to every edge a group element of $GL(1|1)$, or alternatively assigning an ordered tuple $(a,b;\alpha,\beta)$ in the parametrization (\ref{paramg}), we obtain a vector in the coordinate system $\tilde{C}(F,o,\tau)$ for the space of $GL(1|1)$-graph connections without factoring by the gauge group equivalences.
Thus the chart $\tilde{C}(F,o,\tau)$ gives a diffeomeorphism:
\begin{eqnarray}
\tilde{\rm M}_{GL(1|1)}(\tau)\cong \mathbb{R}^{12g-12+6s|12g-12+6s}_+,
\end{eqnarray}

However, we need to take into account that one can change the orientation on edges of the graph, thus leading to an equivalent coordinate system on $\tilde{\rm M}_G(\tau)$.

\begin{Def}
We say that two charts $\tilde{C}(F,o,\tau)$ and $\tilde{C}(F,\bar{o},\tau)$, corresponding to orientations $o$, $\bar{o}$, are \textit{equivalent} if the assignments agree on the edges where orientation is the same, but are related by the formula $(\bar{a},\bar{b};\bar{\alpha},\bar{\beta})=(a^{-1},b^{-1};-b^2\alpha,-b^2\beta)$, where the orientation is reversed.
\end{Def}



Our goal is now to reformulate the gauge transformations at the vertices of a fatgraph as relations between coordinates in the charts $\tilde{C}(F,o,\tau)$. 

\begin{Def}
Let $\vec c\in \tilde C(F,o,\tau)$ be a coordinate vector. Suppose $e_1,e_2,e_3$ are edges oriented towards a vertex $u$ such that, for each $i=1,2,3$, $e_i$ has coordinates $(a_i,b_i;\alpha_i,\beta_i)$. Then a \textit{vertex rescaling} at $u$ is one of three coordinate changes for the coordinates of each $e_i$, where $c$ is positive and even and $\gamma$ is odd:

\begin{itemize}
    \item $(a_i,b_i;\alpha_i,\beta_i)\mapsto (a_ic^{-1},b_i;\alpha_i,\beta_i)$;
    \item $(a_i,b_i;\alpha_i,\beta_i)\mapsto (a_i,b_ic^{-1};c^2\alpha_i,\beta_i)$;
    \item $(a_i,b_i;\alpha_i,\beta_i)\mapsto (a_i(1-\frac12b_i^2\b_i\gamma),b_i;\alpha_i-\gamma,\beta_i)$.
\end{itemize}

If $e_1,e_2,e_3$ are oriented away from a vertex $u$, then there is also a vertex rescaling at $u$ for odd $\gamma$: 

\begin{itemize}
    \item $(a_i,b_i;\alpha_i,\beta_i)\mapsto (a_i(1-\frac12b_i^2\alpha_i\gamma),b_i;\alpha_i,\beta_i+\gamma)$.
\end{itemize}
\end{Def}

It turns out that the edge reversal and the vertex rescalings define an equivalence relation on the chart $\tilde{C}(F, o, \tau)$. The vertex rescalings come from the equivalences on $GL(1|1)$ graph connections provided by the appropriate 1-parameter subgroups. This leads to the following theorem.

\begin{thm}
Let $C(F,\tau) = \tilde C(F,o,\tau)/\sim$ be the quotient by the equivalences provided by edge reversal and vertex rescalings. Then $C(F,\tau)$ is in bijection with the moduli space ${\rm M}_{GL(1|1)}(\tau)$.
\end{thm}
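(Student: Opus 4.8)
The plan is to show that, under the global coordinate chart, the gauge equivalence on $\tilde{\rm M}_{GL(1|1)}(\tau)$ corresponds exactly to the relation $\sim$ generated by edge reversals and vertex rescalings, so that the two quotients coincide. Fix the orientation $o$; assigning to each edge the tuple $(a,b;\alpha,\beta)$ of (\ref{paramg}) gives a bijection $\Psi\colon \tilde{\rm M}_{GL(1|1)}(\tau)\to \tilde C(F,o,\tau)$, the diffeomorphism recorded above. It suffices to prove that for graph connections $x,y$ one has $x\sim_{\rm gauge}y$ if and only if $\Psi(x)\sim\Psi(y)$; then $\Psi$ descends to a bijection $C(F,\tau)\to{\rm M}_{GL(1|1)}(\tau)$. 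Throughout, edge reversal is read as re-recording a connection in a reoriented chart via $g_{\bar e}=g_e^{-1}$, hence is the identity on the underlying space $\tilde{\rm M}_{GL(1|1)}(\tau)$ and is included in $\sim$ only so that the vertex rescalings, defined for uniformly oriented vertices, can be applied at a vertex of arbitrary orientation type.

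First I would check that each generating move of $\sim$ is a genuine gauge transformation, yielding $\Psi(x)\sim\Psi(y)\Rightarrow x\sim_{\rm gauge}y$. A single-vertex gauge transformation with $h_u=h$ and $h_v=1$ for $v\neq u$ sends $g_e\mapsto h^{-1}g_e$ on each edge incoming to $u$ and $g_e\mapsto g_eh$ on each outgoing edge. Using the multiplication formula in the $(a,b;\alpha,\beta)$ coordinates one verifies directly that rescaling (1) is left multiplication by the positive even scalar $g(c^{-1},1;0,0)=c^{-1}I$, rescaling (2) is left multiplication by the diagonal $g(1,c^{-1};0,0)=\diag(c,c^{-1})$, rescaling (3) is left multiplication by the upper unipotent $g(1,1;-\gamma,0)$, and rescaling (4) is right multiplication by the lower unipotent $g(1,1;0,\gamma)$. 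Thus each rescaling agrees with the action of the gauge element $h$ drawn from the corresponding one-parameter subgroup, applied simultaneously to the three half-edges at $u$, and so preserves the gauge class.

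The substance of the argument is the converse. Since every gauge equivalence is a composition of single-vertex transformations, it is enough to realize left multiplication by an arbitrary $h^{-1}\in GL(1|1)$ at one vertex $u$. I would first use edge reversals to orient all three half-edges at $u$ inward. The Gaussian factorization (\ref{gauss})–(\ref{paramg}) writes $h^{-1}$ as an ordered product of a positive even scalar, an upper unipotent, a diagonal, and a lower unipotent; left multiplication by $h^{-1}$ then factors as the composition of the corresponding left multiplications, three of which are rescalings (1), (2), (3). The one missing ingredient is left multiplication by a lower unipotent at an inward vertex, which is not directly available. Here the key device is to reverse all three edges, apply rescaling (4) (right multiplication by $L=g(1,1;0,\gamma)$), and reverse back: the composite sends $g_e\mapsto (g_e^{-1}L)^{-1}=L^{-1}g_e$, which is precisely the desired left multiplication by $L^{-1}$. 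Because (\ref{gauss}) exhibits every element of the identity component as such a product, all $h^{-1}$ are obtained this way, giving $x\sim_{\rm gauge}y\Rightarrow\Psi(x)\sim\Psi(y)$.

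Combining the two implications, $\Psi$ intertwines the two equivalence relations; as $\Psi$ is a bijection and the gauge quotient is surjective onto ${\rm M}_{GL(1|1)}(\tau)$, the induced map $C(F,\tau)\to{\rm M}_{GL(1|1)}(\tau)$ is a well-defined bijection. I expect the main obstacle to be the converse direction, specifically the bookkeeping that reduces a mixed-orientation vertex to a uniform one and synthesizes the lower-unipotent gauge direction from rescaling (4) via the reversal trick, together with confirming that a loop at a vertex, where the gauge element acts by conjugation on a single edge, is handled by applying the relevant rescalings to each of its two half-edges. The one-parameter-subgroup identifications in the forward direction, though essential, are routine computations with the multiplication and inversion formulas.
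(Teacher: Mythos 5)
Your proof is correct and follows essentially the same route as the paper's: identify edge reversal with group inversion, identify the vertex rescalings with gauge transformations by the scalar, diagonal, upper-unipotent, and lower-unipotent one-parameter subgroups, and conclude because these subgroups generate the identity component of $GL(1|1)$. The only difference is one of detail: where the paper settles the converse direction with the single remark that the four elements generate $GL(1|1)$, you make this explicit via the Gaussian factorization (\ref{gauss}) and the reverse--apply--reverse trick that converts rescaling (4) into lower-unipotent left multiplication at an inward vertex, which is a worthwhile elaboration since the orientation mismatch between rescalings (1)--(3) and (4) is exactly what the paper's remark glosses over.
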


\begin{proof}
Indeed, explicitly, the coordinates $(a,b;\alpha,\beta)$ correspond to \begin{align*}
    \param ab\alpha\beta
\end{align*}
in $GL(1|1)$. The inverse of such an element is 
\begin{align*}
    \parami ab\alpha\beta,
\end{align*}
which corresponds to the coordinates $(a^{-1},b^{-1};-b^2\alpha,-b^2\beta)$. Therefore, the rule for reversing orientations in $\tilde{C}(F,o,\tau)$ matches the rule for reversing orientations of a graph connection on $\tau$. 

Now consider a graph connection on $\tau$. For an edge $e$ directed towards a vertex $u$ in $\tau$, there is a gauge transformation $g_e\mapsto h_u^{-1}g_e$, where $g_e$ is a group element on $e$, and $h_u$ is a group element associated to the vertex $u$. Similarly, if $e$ is directed away from $u$, the gauge transformation is $g_e\mapsto g_eh_u$. This gauge transformation at $u$ acts on each edge adjacent to $u$. In the case of a trivalent $\tau$, there are three edges $e_1,e_2,e_3$ at a given vertex $u$. Then the vertex rescalings correspond to gauge transformations at $u$, where $h_u$ is one of the following:

\begin{itemize}
    \item $h_u = c$,
    \item $h_u = \pmat{c^{-1}}00c$,
    \item $h_u = \pmat1\gamma01$,
    \item $h_u = \pmat10\gamma1$.
\end{itemize}
Again, $c$ is positive even and $\gamma$ is odd. In the first three gauge transformations, we require that $e_1,e_2,e_3$ all point towards $u$, and for the fourth, we require that the edges all point away from $u$. The claim that the vertex rescalings correspond to gauge transformations follows from routine multiplication in $GL(1|1)$. For instance, in the third scenario, we have \begin{align*}
    &g(1,1;\gamma,0)^{-1}g(a,b;\a,\b) = g(a(1-\frac12b^2\beta\gamma),b;\a+\gamma,\b).
\end{align*}

These four gauge elements generate all possible gauge transformations, since each of the four elements together generate $GL(1|1)$. 
\end{proof}

It is not hard to see that one can simply constrain the vertex rescalings. To do that let us introduce the following notation: for a given edge with assignment $(a,b;\alpha, \beta)$, which is adjacent to vertex $v$, we denote $a^v=a$, $b^v=b$, $\alpha^v=\alpha$, $\beta^v=\beta$, if edge is oriented towards $v$, and $a^v=a^{-1}$ $b^v=b^{-1}$, $\alpha^v=-\alpha$, $\beta^v=-\beta$ otherwise.
Then, let us give the following definition.
\begin{Def}
Let $\vec{c}\in\tilde{C}(F,o,\tau)$ so that $\{(a_i, b_i; \alpha_i, \beta_i)\}_{i=1,2,3}$ are the assignments for edges adjacent to vertex $v$. We refer to the following conditions as \textit{gauge constraints} at vertex $v$:
\begin{eqnarray}
\prod^3_{i=1}{a}^v_i=\prod^3_{i=1}{b}^v_i=1,\quad 
\sum_{i=1}^3{\alpha^v_i}=\sum_{i=1}^3{\beta^v_i}=0
\end{eqnarray}
\end{Def}

One can see that, for fixed orientation $o$, such constraints pick exactly one element from the equivalence classes provided by vertex rescalings. Moreover, we have the following theorem.

\begin{thm}
The coordinate chart $\tilde{C}(F,o,\tau)$ modulo gauge constraints at all vertices $v$ of $\tau$ provides a real-analytic isomorphism
\begin{eqnarray}
{\rm M}_{GL(1|1)}(\tau)\cong\mathbb{R}_+^{4g+2s-2|4g+2s-2},
\end{eqnarray}
so that $\tilde{\rm M}_{GL(1|1)}(\tau)$ is a trivial bundle over $M_{GL(1|1)}(\tau)$ with the fiber $\mathbb{R}_+^{8g-10+4s|8g-10+4s}$.
\end{thm}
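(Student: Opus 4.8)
The plan is to convert the gauge constraints into the homological algebra of the oriented graph $\tau$, read the dimension off the rank of its boundary operator, and then invoke the preceding theorem to identify the constraint locus with ${\rm M}_{GL(1|1)}(\tau)$. First I would record the combinatorics: a trivalent fatgraph for $F$ has $V=2(2g-2+s)=4g-4+2s$ vertices, hence $E=\tfrac32 V=6g-6+3s$ edges, which reproduces $\tilde{\rm M}_{GL(1|1)}(\tau)\cong\R_+^{12g-12+6s\,|\,12g-12+6s}$ with coordinates $(a_e,b_e;\alpha_e,\beta_e)$. Writing $\hat a_e=\log a_e$, $\hat b_e=\log b_e$ (legitimate since the bodies are positive), all four gauge constraints become linear and, to leading (body) order, decouple: each of the $a$-, $b$-, $\alpha$- and $\beta$-families is the single condition $\partial x=0$, where $\partial\colon\R^E\to\R^V$ is the oriented boundary operator $(\partial x)_v=\sum_e\pm x_e$ of $\tau$, the sign being exactly the $\pm$ built into the superscript-$v$ notation (an edge enters its head with $+$ and its tail with $-$).

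The rank of $\partial$ is the crux. Since $\tau$ is connected, $\partial$ has rank $V-1$ and one-dimensional cokernel, the cokernel being the telescoping identity
\begin{equation*}
\sum_{v}(\partial x)_v=\sum_e\big(x_e-x_e\big)=0 ,
\end{equation*}
valid for every $x$. Thus in each family one constraint is redundant and the remaining $V-1$ are independent, so the nominal $(2V\,|\,2V)$ constraints have super-rank $(2V-2\,|\,2V-2)$, with total redundancy exactly $(2\,|\,2)$. The constraint locus $S\subset\tilde C(F,o,\tau)$ is therefore a sub-supermanifold of dimension
\begin{equation*}
\dim S=(2E\,|\,2E)-(2V-2\,|\,2V-2)=(2b_1\,|\,2b_1)=(4g+2s-2\,|\,4g+2s-2),
\end{equation*}
where $b_1=E-V+1=2g+s-1$ is the first Betti number of $\tau$; positivity of the even bodies gives $S\cong\R_+^{4g+2s-2\,|\,4g+2s-2}$.

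Next I would identify $S$ with the moduli space. By the previous theorem $C(F,\tau)=\tilde C(F,o,\tau)/\!\sim\ \cong{\rm M}_{GL(1|1)}(\tau)$, so it suffices to check that, with $o$ fixed, the constraints select exactly one point of $S$ in each vertex-rescaling class — the claim already recorded just before the statement. Existence is the solvability of $\partial\big(x+\partial^{T}\lambda\big)=0$, i.e. $\partial\partial^{T}\lambda=-\partial x$ for the graph Laplacian $\partial\partial^{T}$, which holds because $\partial x\in\operatorname{im}\partial=(\ker\partial^{T})^{\perp}$ by the telescoping identity; the only ambiguity is $\lambda\in\ker\partial^{T}=H^0(\tau)=\R$, i.e. a globally constant rescaling, which is precisely the residual $(2\,|\,2)$ freedom. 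Finally, the complementary directions $\operatorname{im}\partial^{T}$ in the four families have dimension $(2V-2\,|\,2V-2)=(8g-10+4s\,|\,8g-10+4s)$ and are coordinatized by the rescaling parameters, all of which range over $\R_+$ (even) or odd affine lines; sending a connection to its representative in $S$ together with the parameters used to reach it is a real-analytic diffeomorphism $\tilde{\rm M}_{GL(1|1)}(\tau)\cong {\rm M}_{GL(1|1)}(\tau)\times\R_+^{8g-10+4s\,|\,8g-10+4s}$, which is the asserted trivial bundle.

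The main obstacle is the uniqueness half of the slice property beyond leading order. The decoupled linear picture above is only the body-level behaviour; the actual rescalings carry field-dependent, nilpotent corrections (the diagonal rescaling multiplies $\alpha_i$ by $c^2$, and the odd rescalings shift $a_i$ by terms bilinear in the odd coordinates), so the four families interact at higher Grassmann degree. One must verify that these corrections neither create new intersections of a rescaling orbit with $S$ nor obstruct the global product decomposition — equivalently, that the residual globally constant rescaling acts compatibly and that the section $S\hookrightarrow\tilde C(F,o,\tau)$ survives. I expect this to follow by induction on Grassmann degree, each order being an affine-linear equation again solved by the Hodge splitting $\R^E=\ker\partial\oplus\operatorname{im}\partial^{T}$, with the body-level Laplacian computation as the base case.
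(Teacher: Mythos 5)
The paper gives no actual proof of this theorem: it is asserted on the strength of the one\-line remark that the gauge constraints ``pick exactly one element from the equivalence classes provided by vertex rescalings,'' so your proposal must stand on its own. Its combinatorial skeleton is sound and is surely what the authors intend: $V=4g-4+2s$, $E=6g-6+3s$, the constraints at the vertices are the conditions $\partial x=0$ for the oriented boundary operator (after taking logarithms in the even families), $\operatorname{rank}\partial=V-1$ by connectedness and the telescoping identity, and $(2E\,|\,2E)-(2V-2\,|\,2V-2)=(2b_1\,|\,2b_1)$ reproduces both stated dimensions. The existence half of your slice argument also survives, in a weighted form: the odd rescalings shift $\alpha_e,\beta_e$ with coefficients $b_e^{\pm 2}$ rather than $\pm1$, so the relevant operator is a weighted graph Laplacian, but its weights have positive body and the rank-$(V-1)$ argument goes through.

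The genuine gap is in the uniqueness half, and it is not the higher-Grassmann-degree technicality you flag at the end; your decoupling claim fails already at the lowest nontrivial order, because the corrections are \emph{not} nilpotent. The diagonal rescaling moves two families at once: at the head of an edge it sends $b_e\mapsto b_ec^{-1}$, $\alpha_e\mapsto c^{2}\alpha_e$, and (after conjugating by edge reversal) at the tail it sends $b_e\mapsto b_ec$, $\beta_e\mapsto c^{-2}\beta_e$. Now apply this rescaling with the \emph{same} parameter $c$ at every vertex of $\tau$: the net gauge transformation fixes every $a_e$ and $b_e$ and maps $\alpha_e\mapsto c^{2}\alpha_e$, $\beta_e\mapsto c^{-2}\beta_e$ on every edge. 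Since the odd constraints are linear homogeneous and the even ones are untouched, this transformation maps the constraint locus $S$ to itself while acting nontrivially on it; a similar phenomenon occurs for suitably $b$-weighted global odd rescalings, leaving a residual $(1|2)$-dimensional group of gauge symmetries of $S$ (only the global scalar acts trivially). Hence distinct points of $S$ are gauge equivalent, the map $S\to {\rm M}_{GL(1|1)}(\tau)$ is not injective, and no induction on Grassmann degree can repair this: the obstruction is a one-parameter group of honest symmetries, not a nilpotent perturbation of the linear picture. In other words, the step you import from the paper --- that the constraints select exactly one point per rescaling class --- is precisely the unproved step, and as stated it is false; a correct treatment must either further quotient $S$ by this residual $(1|2)$-dimensional action (which changes the stated dimension count) or interpret ${\rm M}_{GL(1|1)}(\tau)$ as something finer than the naive set-theoretic quotient.
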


There are benefits of working with the ``decorated space" $\tilde{\rm M}_{GL(1|1)}(\tau)$ instead of ${\rm M}_{GL(1|1)}(F)$. As we shall see in the next section, the gauge freedom allows us to write a compact formula for a flip transformation in our coordinates, somewhat in the spirit of Penner coordinates on decorated Teichm\"uller space.

\section{Minimal formula for the flip transformations}

First we describe the suitable description of the elements of ${M}_{GL(1|1)}(F)$ in terms of coordinates in the extended chart $\tilde{C}(F,o,\tau)$.

Namely, we assign coordinates $(a,b;\alpha,\beta)$, corresponding to a $GL(1|1)$ group element, to an oriented edge as follows: the odd parameters $\alpha, \beta$ are assigned to the terminal and initial points of the edge, while we put the pair of even parameters $(a,b)$ between them, as depicted in Figure
\ref{fig:coordorient}. This ordering goes along with the Gaussian decomposition from Section 2.

\begin{figure}
    \centering
    \includegraphics[scale=0.4]{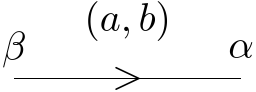}
    \caption{Coordinates on an edge.}
    \label{fig:coordorient}
\end{figure}

Let us now describe the minimal representation of flip transformation which involves change of the minimal number of parameters. We will fix the odd elements at the ends, as well as eliminate the dependence on fermionic coordinates in the mid-edge after the flip, as depicted in Figure \ref{fig:fixendsol}.


\begin{figure}
    \centering
    \includegraphics[scale=0.4]{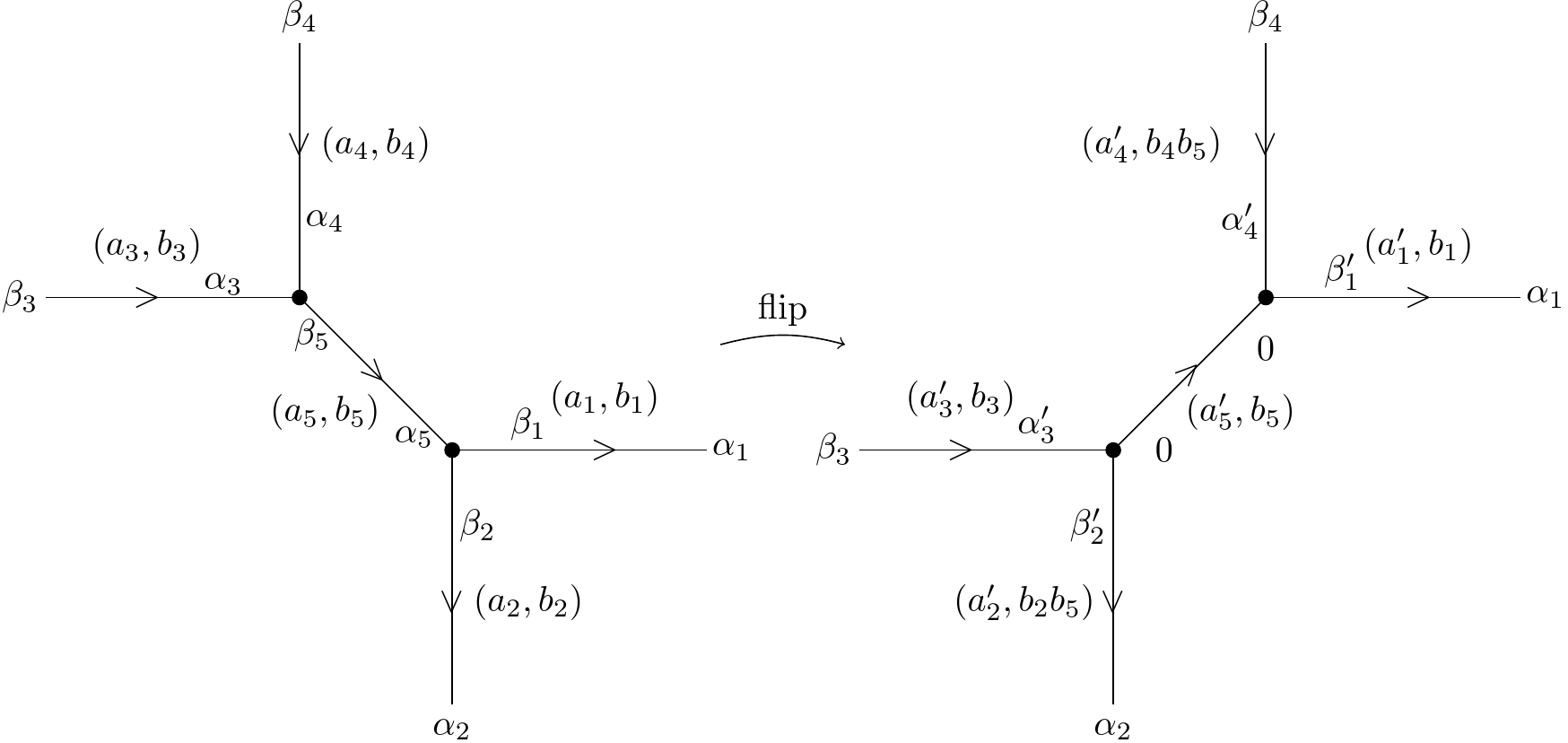}
\caption{Minimal form of the flip transformation}
    \label{fig:fixendsol}
\end{figure}

\begin{thm}
Let $\tau'$ represent the flip of $\tau$ at an edge $e$, with orientations as depicted in Figure \ref{fig:graphflip}. 
Let $\vec c\in \tilde{C}(F,o,\tau)$ and let $\vec {c'}$ be the corresponding element in 
$\tilde{C}(F,o',\tau')$ modulo gauge equivalences. 
Choose coordinates for $\vec c$ such that $\{(a_i,b_i;\alpha_i,\beta_i)\}_{i=1,\dots,5}$ represent the edges where the flip occurs. 
Then there is a choice for $\vec {c'}$, which is described on the Figure \ref{fig:fixendsol}, where the primed variables are given by the following formulas:
\begin{eqnarray}
&&\beta_1' = \beta_1 + b_5^2\beta_5,~ \beta_2'=\beta_5+b_5^{-2}\beta_2,~ \alpha_3' = \alpha_3+b_5^2\alpha_5, \alpha_4'=b_5^{-2}\alpha_4+\alpha_5\nonumber\\ 
&&a_1'=a_1(1-\frac12b_1^2b_5^2\alpha_1\beta_5),~ a_3'=a_3(1-\frac12b_3^2b_5^2\alpha_5\beta_3),~ a_5'=a_5(1-\frac12b_5^2\alpha_5\beta_5),\nonumber\\ 
&&a_2'=a_2a_5(1-\frac12b_2^2b_5^2\alpha_2\beta_5)(1-\frac12b_5^2\alpha_5\beta_5),~  
    a_4'=a_4a_5(1-\frac12b_4^2b_5^2\alpha_5\beta_4)(1-\frac12b_5^2\alpha_5\beta_5).   
\end{eqnarray}
\end{thm}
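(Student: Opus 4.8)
The plan is to realize the minimal flip as one explicit gauge representative of the asymmetric solution already recorded in Figure~\ref{fig:graphflips}, namely $g_a'=g_a$, $g_b'=g_bg_e$, $g_c'=g_c$, $g_d'=g_eg_d$, $g_f=g_e$, which is known to solve all six monodromy-conservation equations for any group. First I would write $g_e=(a_5,b_5;\alpha_5,\beta_5)$ and expand the two nontrivial products $g_bg_e$ and $g_eg_d$ with the multiplication law of Section~2. This yields explicit but asymmetric tuples in which the mid-edge $f$ still carries the odd data $\alpha_5,\beta_5$, the $b$-labels of edges $2$ and $4$ have acquired a spurious factor $b_5^{\pm1}$, and a leftover scalar $a_5$ has been deposited on edges $2$ and $4$. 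Because the solution is unique only up to gauge, it then suffices to exhibit a single chain of vertex rescalings carrying these tuples to the normal form of Figure~\ref{fig:fixendsol}.

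The heart of the argument is the choice of that chain. At each of the two trivalent vertices $u',v'$ produced by the flip I would apply a fermionic vertex rescaling whose parameter $\gamma$ is tuned, using the orientations of Figure~\ref{fig:graphflip}, to cancel one odd component of $g_f=g_e$. Because such a rescaling acts on all three edges at the vertex at once, the same $\gamma$ is forced onto the two adjacent boundary edges; the edge-reversal rule $(\alpha,\beta)\mapsto(-b^2\alpha,-b^2\beta)$ and the $c^2$-factor of the even rescaling are exactly what turn this into the asymmetric weights in $\beta_1'=\beta_1+b_5^2\beta_5$, $\beta_2'=\beta_5+b_5^{-2}\beta_2$, $\alpha_3'=\alpha_3+b_5^2\alpha_5$, $\alpha_4'=b_5^{-2}\alpha_4+\alpha_5$. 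The even corrections $a\mapsto a(1-\tfrac12 b^2(\text{odd})\gamma)$ attached to these fermionic rescalings are tied to the very same shifts: substituting the shift parameter of edge $i$ reproduces the prefactor $(1-\tfrac12 b_i^2b_5^2\cdots)$ in each $a_i'$, and fixes the even label of $f$ to $a_5'=a_5(1-\tfrac12 b_5^2\alpha_5\beta_5)$. Finally I would apply the bosonic rescalings, in particular $h_u=\mathrm{diag}(c^{-1},c)$ with $c=b_5^{\pm1}$, at $u'$ and $v'$ to cancel the spurious $b_5^{\pm1}$ introduced into the $b$-labels of edges $2$ and $4$, returning every $b_i$ to its original value, while the leftover scalar $a_5$ survives to give $a_2'=a_2a_5(\cdots)$ and $a_4'=a_4a_5(\cdots)$.

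What remains is a verification: the resulting tuples are gauge-equivalent to the solution of Figure~\ref{fig:graphflips} by construction, hence solve the monodromy equations, and by design they satisfy the two normalizations of Figure~\ref{fig:fixendsol}, that the odd data at the four ends are the prescribed $\alpha_i',\beta_i'$ while the mid-edge is purely even. The main obstacle I anticipate is not conceptual but the fermionic bookkeeping: one must track the anticommuting signs and the $b_5^{\pm2}$ weights as they pass through the orientation conventions $a^v,b^v,\alpha^v,\beta^v$ and through the four distinct rescalings applied at two vertices, and then confirm that the even prefactors assemble into exactly the displayed products rather than into merely gauge-equivalent ones. Arranging the computation so that the $\alpha$-cancellation at one vertex and the $\beta$-cancellation at the other decouple is what makes the closed form come out as stated.
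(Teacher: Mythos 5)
Your outline follows the paper's own route for most of its length: start from the asymmetric representative of Theorem 2 ($g_a'=g_a$, $g_b'=g_bg_e$, $g_c'=g_c$, $g_d'=g_eg_d$, $g_f=g_e$), expand $g_bg_e$ and $g_eg_d$ with the multiplication law, and then use edge reversals together with one fermionic vertex rescaling at each of the two new vertices, tuned to cancel the odd coordinates $\alpha_5,\beta_5$ carried by the mid-edge; the even prefactors in the $a_i'$ indeed arise from exactly those fermionic rescalings. Up to that point your proposal reproduces the paper's computation.

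The final step, however, is a genuine error. You claim to finish by applying bosonic rescalings $h=\mathrm{diag}(c^{-1},c)$ with $c=b_5^{\pm1}$ at $u'$ and $v'$ to cancel the factors $b_5^{\pm1}$ in the $b$-labels of edges $2$ and $4$, ``returning every $b_i$ to its original value.'' This is impossible, and it also contradicts the statement being proved. A vertex rescaling acts on all three edges at a vertex, so removing $b_5$ from edge $2$ at $u'$ necessarily multiplies the $b$-labels of edges $3$ and $5$ by $b_5^{\mp1}$ as well; the correction cannot be localized. Invariantly: gauge transformations at the two interior vertices $u',v'$ leave the composite holonomy along the path $b\to f^{-1}\to d$ unchanged, and its $b$-component for the Theorem-2 representative is $(b_2b_5)\,b_5^{-1}\,(b_5b_4)=b_2b_5b_4$; hence any admissible representative must satisfy $b_2'(b_5')^{-1}b_4'=b_2b_5b_4$, which rules out $b_2'=b_2$, $b_4'=b_4$, $b_5'=b_5$ (it would force $b_5^2=1$). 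This is precisely why the paper's normal form in Figure \ref{fig:fixendsol} retains $b_2b_5$ and $b_4b_5$ as the $b$-labels of edges $2$ and $4$, and why the theorem lists no formulas for $b_i'$. Moreover, even attempting your rescaling would wreck what you had already achieved: the move $(a,b;\alpha,\beta)\mapsto(a,bc^{-1};c^2\alpha,\beta)$ multiplies the fermionic coordinates of all three adjacent edges by $b_5^{\pm2}$, destroying the already-normalized $\alpha_3'=\alpha_3+b_5^2\alpha_5$, $\alpha_4'=b_5^{-2}\alpha_4+\alpha_5$, etc. A related misattribution occurs earlier in your argument: the $b_5^{\pm2}$ weights in the odd shifts come only from the multiplication law and the edge-reversal rule $(\alpha,\beta)\mapsto(-b^2\alpha,-b^2\beta)$, not from the $c^2$-factor of the even rescaling, which the paper's proof never invokes. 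Deleting the final bosonic step (and accepting $b_2b_5$, $b_4b_5$ as the new even labels) converts your proposal into the paper's proof.
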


\begin{proof}
Recall that the product of elements $(a,b;\alpha,\beta)$ and $(c,d;\gamma,\delta)$ is $(ac(1-\frac12\b\gamma)(1-\frac12b^2d^2\alpha\delta),bd;\alpha+b^{-2}\gamma,d^{-2}\beta+\delta)$. 

Applying this formula to the general solution of the flip (Theorem 2) means that we can first write $\vec {c'}$ with coordinates 

\begin{align*}
    &(a_1,b_1;\alpha_1,\beta_1), \\ &(a_2a_5(1+\frac12\alpha_5\beta_2)(1-\frac12b_2^2b_5^2\alpha_2\beta_5),b_2b_5;\alpha_2+b_2^{-2}\alpha_5,b_5^{-2}\beta_2+\beta_5), \\ &(a_3,b_3;\alpha_3,\beta_3), \\ &(a_4a_5(1+\frac12\alpha_4\beta_5)(1-\frac12b_4^2b_5^2\alpha_5\beta_4),b_4b_5;\alpha_5+b_5^{-2}\alpha_4,b_4^{-2}\beta_5+\beta_4), \\ &(a_5,b_5;\alpha_5,\beta_5).
\end{align*}

We see that $\alpha_1$ and $\beta_3$ are already in place, so we do not want to change these. We reverse the orientations of the second, fourth, and fifth edges so that we may alter the $\alpha$ term of the second and the $\beta$ term of the fourth. This gives 

\begin{align*}
    &(a_2^{-1}a_5^{-1}(1-\frac12\alpha_5\beta_2)(1+\frac12b_2^2b_5^2\alpha_2\beta_5),b_2^{-1}b_5^{-1};-b_2^2b_5^2\alpha_2-b_5^2\alpha_5,-b_2^2\beta_2-b_2^2b_5^2\beta_5), \\ &(a_4^{-1}a_5^{-1}(1-\frac12\alpha_4\beta_5)(1+\frac12b_4^2b_5^2\alpha_5\beta_4),b_4^{-1}b_5^{-1};-b_4^2\alpha_4-b_4^2b_5^2\alpha_5,-b_4^2b_5^2\beta_4-b_5^2\beta_5), \\ &(a_5^{-1},b_5^{-1};-b_5^2\alpha_5,-b_5^2\beta_5).
\end{align*}

Now that the second, third, and fifth edges are pointed towards a vertex, we apply a move which adds $b_5^2\alpha_5$ to the first fermionic coordinate of these edges. 

\begin{align*}
    &(a_2^{-1}a_5^{-1}(1+\frac12b_2^2b_5^2\alpha_2\beta_5)(1+\frac12b_5^2\alpha_5\beta_5)),b_2^{-1}b_5^{-1};-b_2^2b_5^2\alpha_2,-b_2^2\beta_2-b_2^2b_5^2\beta_5), \\ &(a_3(1-\frac12b_3^2b_5^2\alpha_5\beta_3),b_3;\alpha_3+b_5^2\alpha_5,\beta_3), \\  &(a_5^{-1}(1+\frac12b_5^2\alpha_5\beta_5),b_5^{-1};0,-b_5^2\beta_5).
\end{align*}

Now, we can invert the second edge again to get it to its original orientation; its formula is saved for the end of the proof.

The first, fourth, and fifth edges are all pointed away from a common vertex. We can then apply a move that adds $b_5^2\beta_5$ to the second fermionic coordinate of these edges. This gives 

\begin{align*}
    &(a_1(1-\frac12b_1^2b_5^2\alpha_1\beta_5),b_1;\alpha_1,\beta_1+b_5^2\beta_5), \\ &(a_4^{-1}a_5^{-1}(1+\frac12b_4^2b_5^2\alpha_5\beta_4)(1+\frac12b_5^2\alpha_5\beta_5),b_4^{-1}b_5^{-1};-b_4^2\alpha_4-b_4^2b_5^2\alpha_5,-b_4^2b_5^2\beta_4), \\ &(a_5^{-1}(1+\frac12b_5^2\alpha_5\beta_5),b_5^{-1};0,0).
\end{align*}

We invert the fourth and fifth edges to match the original choice of orientation, giving the final coordinates for $\vec {c'}$:

\begin{align*}
    &(a_1(1-\frac12b_1^2b_5^2\alpha_1\beta_5),b_1;\alpha_1,\beta_1+b_5^2\beta_5), \\ &(a_2a_5(1-\frac12b_2^2b_5^2\alpha_2\beta_5)(1-\frac12b_5^2\alpha_5\beta_5),b_2b_5;\alpha_2,\beta_5+b_5^{-2}\beta_2), \\ &(a_3(1-\frac12b_3^2b_5^2\alpha_5\beta_3),b_3;\alpha_3+b_5^2\alpha_5,\beta_3), \\ &(a_4a_5(1-\frac12b_4^2b_5^2\alpha_5\beta_4)(1-\frac12b_5^2\alpha_5\beta_5),b_4b_5;b_5^{-2}\alpha_4+\alpha_5,\beta_4), \\ &(a_5(1-\frac12b_5^2\alpha_5\beta_5),b_5;0,0).
\end{align*}

These coordinates have the desired fixed fermionic ends.
\end{proof}

\section{Poisson bracket}

In this section, we discuss in detail the Poisson structure that can be endowed on the space of graph connections $\tilde{\rm M}_G(\tau)$. It descends onto ${\rm M}_G(\tau)$, so that the isomorphism with the space of flat connections, modulo the action of the gauge group, is a Poisson manifold isomorphism. We recall this construction from \cite{fock1998poisson}.

As before, let us fix a surface $F$ with genus $g\geq 0$ and $s\geq 1$ punctures, such that $2g+s-2>0$, and a trivalent fatgraph $\tau\subset F$. $\tilde{\rm M}_G(\tau)$ is the space of graph connections on $\tau$. Let $G^{\tau}$ be the gauge group; that is, a direct product of the group $G$ with itself, one copy for each vertex in $\tau$. 

We have seen that $G^{\tau}$ acts on $\tilde{\rm M}_G(\tau)$, which gives the equivalence of graph connections. In order for the action to be a Poisson map, $ G^\tau$ needs a Poisson-Lie structure. Then $G$ itself needs a Poisson-Lie structure. This can be attained by choosing an $r$-matrix, which involves the Lie algebra $\mf g$ of $G$ \cite{Chari:1994pz}. This is an object satisfying the so-called classical Yang-Baxter equation (CYBE), namely:

\begin{eqnarray} 
[r^{12},r^{13}]+[r^{12}, r^{23}]+[r^{13}, r^{23}]=0
\end{eqnarray}

We also assume that the symmetric part of the $r$-matrix is the Casimir element $\Omega$, i.e. 
\begin{eqnarray}\label{rsymm}
\frac{1}{2}(r^{12}+r^{21})=\Omega= \sum e_i\otimes e_i,
\end{eqnarray}
 based on the invariant bilinear form on $\mathfrak{g}$ (so 
 that $\{e_i\}_{i=1,\dots\rm{dim}\mathfrak{g}}$ is an orthonormal basis).  For example, consider simple Lie algebras with the standard triangular decomposition $\mathfrak{g}=\mathfrak{n}_+\oplus \mathfrak{h}\oplus \mathfrak{n}_-$. Here, $\mathfrak{h}$ is 
the Cartan part and $\mf n_{\pm}$ are spanned by positive and negative root elements. 
The basic classical $r$-matrix of $\mf g$ is given by 
$r=\Omega_{\mathfrak{h}}+2\sum_{\alpha >0}e_{\alpha}\otimes e_{-\alpha}$, 
where $\Omega_{\mathfrak{h}}$ is the Casimir element restricted to $\mathfrak h$, and $(e_{\alpha}, e_{-\alpha})=1$ for all $\alpha$ with respect to an invariant form. 
For $\mathfrak{gl}(1|1)$, one can construct a solution of CYBE in a similar fashion: the only difference is that the nondegenerate bilinear form is defined by supertrace in the defining representation, not by the Killing form, which is degenerate. This way, the solution to CYBE with the condition 
(\ref{rsymm}) is given by (see also \cite{Gizem04}):
\begin{eqnarray} 
&&r=E\otimes N+N\otimes E-2 \Psi^{+}\otimes\Psi^{-},\nonumber\\
&&\Omega=E\otimes N+N\otimes E- \Psi^{+}\otimes\Psi^{-}+\Psi^{-}\otimes \Psi^{+}
\end{eqnarray}
This natually provides a Poisson-Lie structure on 
$GL(1|1)$. 

Let us describe the construction in \cite{fock1998poisson} of a Poisson bracket on $\tilde{\rm M}_G(\tau)$, adapted to our needs (simple supergroups). Let us adopt the following notation for graph connections. To a half-edge adjacent to a given vertex $v$, we associate an element $g_v$. To the opposite half edge we associate the element $g_v^{-1}$. In this way, we do not need to refer to a specific orientation of the edge. We then have a map
\begin{eqnarray}\label{doubmap}
G\to G\times G, g\mapsto (g,g^{-1})
\end{eqnarray}
 associated to every edge of $\tau$, giving
a map $\tilde{{\rm M}}_G(\tau)\to G^{2(6g-6+3s)}$. 

Given an orthonormal basis $\{e_i\}_{i=1,\dots\rm{dim}\mathfrak{g}}$, one can define a tangent vector at $X_i(g) = L_i(g) - R_i(g^{-1})$ on the image of map (\ref{doubmap}), where $L_i, R_i$ are left- and right-invariant vector fields associated to $e_i$.  

The final ingredient needed to write the Poisson structure is to fix a linear order at each vertex of $\tau$, not just a cyclic one. This is called a ciliation in \cite{fock1998poisson}. Since $\tau$ is trivalent, this means each vertex has edges labelled $1,2,3$ in a cyclic manner. For a vertex of $\tau$, we write the labels of the edges as elements of the vertex.

This allows us to write the following Poisson bivector which is a sum of bivectors over the set of vertices $V(\tau)$:
\begin{eqnarray}
P=\sum_{v\in V(\tau)}P_v, \quad P_v=\sum_{\alpha<\beta}\sum_{i,j}r^{ij}(v)X_i^\alpha \wedge X_j^\beta +\frac12 \sum_{\alpha}\sum_{i,j}r^{ij}(v)X_i^\alpha\wedge X_j^\alpha 
\end{eqnarray}
where $\alpha,\beta$ are edges adjacent to $v$ ordered according to ciliation, and $r(v)$ is an $r$-matrix associated to vertex $v$.

Then one has the following theorem.

\begin{thm}\cite{fock1998poisson}
The bivector $P$ defines a Poisson structure on $\tilde{{\rm M}}_G(\tau)$, and the group $G^{\tau}$ acts on 
$\tilde{{\rm M}}_G(\tau)$ in a Poisson way.
\end{thm}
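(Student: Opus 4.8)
The theorem is the Fock--Rosly construction \cite{fock1998poisson} transported to the supergroup setting, so the plan is to follow that argument while carefully tracking the Koszul signs produced by the $\mathbb{Z}_2$-grading. The assertion has two parts: that $P$ satisfies the graded Jacobi identity, and that the $G^\tau$-action is Poisson. For the first part I would reduce the Jacobi identity to the vanishing of the Schouten--Nijenhuis bracket $[P,P]$, and expand this bracket using $P=\sum_{v\in V(\tau)}P_v$ together with the commutation relations of the fundamental fields $X_i^\alpha=L_i-R_i$. The structural inputs are that left- and right-invariant fields commute, $[L_i,R_j]=0$, so that $X_i^\alpha$ and $X_j^\beta$ commute whenever $\alpha,\beta$ are distinct edges, whereas $[X_i^\alpha,X_j^\alpha]$ reproduces the structure constants of $\mathfrak{gl}(1|1)$.

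I would then organize the terms of $[P,P]$ by the number of vertices they involve. The purely local contributions, where all three slots come from a single vertex $v$, assemble into the left-hand side of the classical Yang--Baxter equation for $r(v)$; here the linear order (ciliation) at $v$ is precisely what converts the cyclic CYBE expression into the antisymmetrized single-vertex sum, so these terms vanish by the hypothesis that $r$ solves CYBE. In the diagonal self-terms only the antisymmetric part of $r^{ij}$ survives the wedge, which is what makes this matching consistent.

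The cross terms are the delicate part, since every edge is shared by two vertices $v,v'$ and the doubling map $g\mapsto(g,g^{-1})$ links the fields at its two ends. This is where condition \eqref{rsymm} enters: because the symmetric part of $r$ equals the Casimir $\Omega$ of the supertrace form, and $\Omega$ is $\mathfrak{gl}(1|1)$-invariant, the half-edge contribution at $v$ cancels that at $v'$. I expect this cross-term cancellation, together with the super-sign bookkeeping in the wedge products and graded commutators, to be the main obstacle; the point specific to $GL(1|1)$ is that the supertrace form replaces the degenerate Killing form, so one must verify directly that the $r$ and $\Omega$ displayed above genuinely satisfy the graded CYBE and ad-invariance.

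Finally, for the claim that $G^\tau$ acts in a Poisson way, I would equip each factor $G$ with the Sklyanin Poisson--Lie structure determined by $r$, give $G^\tau$ the product Poisson--Lie structure, and check that the action map is Poisson. Concretely this reduces to the equivariance of $P$ under the infinitesimal gauge action, which again follows from the ad-invariance of $\Omega$ and the intertwining of $L_i,R_i$ with left and right translations, so no input beyond the two properties of $r$ used above is required.
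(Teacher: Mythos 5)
The paper offers no proof of this statement to compare against: the theorem is imported verbatim from Fock--Rosly \cite{fock1998poisson}, and the text around it only specializes the bivector to $\mathfrak{gl}(1|1)$. Judged on its own terms, your outline reproduces the architecture of the Fock--Rosly argument correctly: reduce Jacobi to $[P,P]=0$ for the Schouten bracket, exploit that fields on distinct edges commute and that same-edge fields close on structure constants, use CYBE at each vertex (with the ciliation turning the cyclic order into the linear order needed for the cancellation), use the condition (\ref{rsymm}) for an edge-wise cancellation between the two ends of every edge, and prove the Poisson property of the $G^{\tau}$-action by putting the product Sklyanin-type Poisson--Lie structure on $G^{\tau}$. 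Your emphasis that the genuinely new content in the super setting is the graded verification of CYBE and ad-invariance for the displayed $r$ and $\Omega$ (supertrace form in place of the Killing form, super-antisymmetry so that $r_a=-\Psi_+\otimes\Psi_--\Psi_-\otimes\Psi_+$ is ``antisymmetric'' only in the graded sense) is exactly right, and is precisely what the paper leaves implicit.

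One piece of bookkeeping in your sketch is misattributed, though the ingredients you invoke are the right ones. Under the identification of the image of $g\mapsto(g,g^{-1})$ with $G$, the field attached to a half-edge at $v$ is left-invariant while the field attached to the opposite half-edge at $v'$ is (minus) right-invariant; hence every bracket between a field entering $P_v$ and a field entering $P_{v'}$ vanishes by $[L_i,R_j]=0$, so the genuine cross terms $[P_v,P_{v'}]$, $v\neq v'$, are identically zero and require no use of (\ref{rsymm}). Where (\ref{rsymm}) actually enters the Jacobi identity is inside the diagonal terms: $[P_v,P_v]$ does \emph{not} vanish outright from CYBE, but is reduced by CYBE to a sum over the edges incident to $v$ of anomaly trivectors proportional to the ad-invariant element given by the CYBE expression of $r_a$ (equal to minus that of $\Omega$), realized left-invariantly on each edge variable. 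The matching anomaly produced by $[P_{v'},P_{v'}]$ at the other end of the same edge is the right-invariant realization with opposite sign, and ad-invariance of that element---which is exactly what (\ref{rsymm}) buys, since it forces the left- and right-invariant extensions to coincide---makes the two cancel. So the cancellation is edge-wise between the two ends, as you say, but the terms being cancelled come from $[P_v,P_v]$ and $[P_{v'},P_{v'}]$, not from $[P_v,P_{v'}]$. With that correction, and with the graded computations actually carried out, your plan yields a complete proof.
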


If each $r(v)$ has the same symmetric part, then denoting the antisymmetric part as $r_a(v)=\frac12(r(v)^{12}-r(v)^{21})$, we obtain:

\begin{eqnarray}
&&P_v=P_{v,h}+P_{v,t},\nonumber\\
&&P_{v,h}=\sum_{i,j}r_a^{ij}(v)X^v_i\otimes X^v_j, \quad P_{v,t}=\sum_{\alpha, \beta;i}(v, \alpha,\beta;i)X_i^{\alpha}\otimes X_i^{\beta}
\end{eqnarray}

Here $X^v_i=\sum_{\alpha}X_i^{\alpha}$ is the generator of gauge transformations at $v$ and is tangent to the orbits of $G^{\tau}$, and 
\begin{equation}
(v, \alpha, \beta;i)=
\begin{cases}
1\qquad\qquad \quad\alpha>\beta\\
0\qquad \qquad\quad \alpha=\beta\\
-(-1)^{p(i)} \qquad\alpha<\beta,
\end{cases}
\label{eq:vabi}
\end{equation}
where $p(i)$ stands for the parity of generator $e_i$. From here one can see that the choice of antisymmetric part of the $r$-matrix is irrelevant when considering the Poisson bracket on the quotient ${\rm M}_G(\tau)$.

In our case $r_a=-\Psi_+\otimes\Psi_--\Psi_-\otimes\Psi_+$. In $P_{v,t}$, we sum over the $i$ index to get terms corresponding to the symmetric part (Casimir element). Once we sum over $i$, there is no need for \ref{eq:vabi}, so we use the simplified $(v,\a,\b) = \textrm{sign}(\a-\b)$. Then we have

\begin{eqnarray}
&&P_{v,h}^{GL(1|1)}=-X^{v}_{\Psi_+}\otimes X^v_{\Psi_-}-X^{v}_{\Psi_-}\otimes X^v_{\Psi_+},\nonumber\\
&&P^{GL(1|1)}_{v,t}=\sum_{\alpha, \beta;i}(v, \alpha,\beta)(X_E^{\alpha}\otimes X_N^{\beta}+
X_E^{\alpha}\otimes X_N^{\beta}-X_{\Psi^+}^{\alpha}\otimes X_{\Psi^-}^{\beta}+X_{\Psi^-}^{\alpha}\otimes X_{\Psi^+}^{\beta}).
\end{eqnarray}

Given our system of coordinates $\tilde{C}(F, o, \tau)$ for $\tilde{{\rm M}}_G(\tau)$, one can easily write down the presentation of the generators $X^{\alpha}_i$. Let us assume that an edge $\mu$, with coordinates $(a,b;\alpha,\beta)$, is oriented towards a vertex $v$. Then
\begin{eqnarray}
&&X^{\mu}_E=a\partial_a, \quad X^{\mu}_N=-\frac{1}{2}b\partial_b+\alpha\partial_{\alpha}, \nonumber\\
&&X^{\mu}_{\Psi^{+}}=-\frac{1}{2}ab^2\beta \partial_a+\partial_{\alpha}, \quad X^{\mu}_{\Psi^{-}}=
-\frac{1}{2}\alpha a\partial_a+b^{-2}\partial_{\beta}.
\end{eqnarray}
We introduce a change of coordinates by rearranging the one-parameter subgroups, as follows:  
\begin{eqnarray}
&&\param ab\a\b = \hat a\pmat 1{\hat\a}01 \pmat 10{\hat\b}1\pmat{\hat{b}^{-1}}00{\hat b},
\nonumber
\end{eqnarray}
 
\begin{eqnarray}
&&g(a,b;\alpha,\beta) =g(\hat{a}(1-\frac{1}{2}\hat{\alpha}\hat{\beta}),\hat{b}; \hat{\alpha}, \hat{b}^{-2}\beta).
\end{eqnarray}
This change of coordinates simplifies the vector fields:
\begin{eqnarray}
&&X^{\mu}_E=\hat a\partial_{\hat a}, \quad X^{\mu}_N=-\frac{1}{2}\hat b\partial_{\hat b}+\hat\alpha\partial_{\hat\alpha}-\hat\beta\partial_{\hat\beta},
\nonumber\\
&&X^{\mu}_{\Psi^{+}}=\partial_{\hat\alpha}, \quad X^{\mu}_{\Psi^{-}}=
-\hat\alpha \hat a\partial_{\hat a}+\partial_{\hat \beta}.
\end{eqnarray}
These coordinates, given that all three edges are pointed at the vertex $v$, form a Darboux-like chart for the bivector $P_v$.

\bibliography{biblio}
\end{document}